\title{The global resilience of Hamiltonicity in $G(n,p)$}
\author{
Yahav Alon
\thanks{School of Mathematical Sciences, Raymond and Beverly Sackler Faculty of Exact Sciences, Tel Aviv University,
Tel Aviv, 6997801, Israel. Email: yahavalo@tauex.tau.ac.il.}
}
\begin{document}
\maketitle
\newtheorem{thm}{Theorem}
\newtheorem{propos}{Proposition}
\newtheorem{defin}{Definition}
\newtheorem{lemma}{Lemma}[section]
\newtheorem{corol}{Corollary}[section]
\newtheorem{corol*}{Corollary}
\newtheorem{thmtool}{Theorem}[section]
\newtheorem{corollary}[thmtool]{Corollary}
\newtheorem{lem}[thmtool]{Lemma}
\newtheorem{defi}[thmtool]{Definition}
\newtheorem{prop}[thmtool]{Proposition}
\newtheorem{clm}[thmtool]{Claim}
\newtheorem{conjecture}{Conjecture}
\newtheorem{problem}{Problem}
\newcommand{\Proof}{\noindent{\bf Proof.}\ \ }
\newcommand{\Remarks}{\noindent{\bf Remarks:}\ \ }
\newcommand{\Remark}{\noindent{\bf Remark:}\ \ }

\newcommand{\Dist}[1]{\mathsf{#1}}
\newcommand{\Bin}{\Dist{Bin}}
\newcommand{\HH}{\mathcal{H}}
\newcommand{\PP}{\mathcal{P}}
\newcommand{\pr}{\mathbb{P}}
\newcommand{\sm}{\text{SMALL}}

\begin{abstract}
Denote by $r_g(G,\HH)$ the \emph{global resilience} of a graph $G$ with respect to Hamiltonicity. That is, $r_g(G,\HH)$ is the minimal $r$ for which there exists a subgraph $H\subseteq G$ with $r$ edges, such that $G\setminus H$ is not Hamiltonian. We show that if $p$ is above the Hamiltonicity threshold and $G\sim G(n,p)$ then, with high probability\footnote{We say that a sequence of events $(A_n)_{n=1}^{\infty}$ occurs with high probability if $\lim _{n\to \infty}\pr (A_n) =1$.}, $r_g(G,\HH)=\delta (G)-1$. This is easily extended to the full interval: for every $p(n)\in [0,1]$, if $G\sim G(n,p)$ then, with high probability, $r_g(G,\HH)= \max \{ 0,\delta (G)-1 \}$.
\end{abstract}

\section{Introduction} \label{sec-intro} 

Let $\PP$ be a monotone increasing graph property. For a graph $G$, we define the \emph{global resilience} of $G$ with respect to $\PP$, denoted $r_g(G,\PP)$, as follows.
$$
r_g(G,\PP) \coloneqq \min \left\{ m\in \mathbb{N} \mid \exists H\subseteq G: e(H)=m, G \setminus H \text{ is not in } \PP \right\} .
$$
That is, $r_g(G,\PP)$ is the minimal number of edge removals from $G$ such that the resulting graph does not satisfy $\PP$. This notion serves as a measure of how ``strongly" $G$ satisfies the property $\PP$, by its distance from the closest graph outside of $\PP$. It is by no means a new notion, and many long standing results in extremal graph theory can be expressed by it. For example, the extremal number $\text{ex}(n,G)$ can be expressed as $\binom{n}{2}-r_g(K_n,\PP _G)$, where $\PP _G$ denotes the property of containing a copy of $G$ as a subgraph. Tur{\'a}n's theorem can now be stated as: for every $n\ge r\ge 3$ integers, $r_g(K_n,\PP _{K_r}) = (1+o(1))\cdot \frac{n^2}{2(r-1)}$.

We denote by $\HH$ the property of Hamiltonicity. It is trivial to see that, for every graph $G$ with $\delta (G) \ge 1$, one has $r_g(G,\HH) \le \delta (G) -1$. Indeed, one can ensure that $G\setminus H$ contains no Hamilton cycle by having $H$ contain all the edges incident in $G$ to some vertex $v\in V(G)$ but one. By choosing $v$ to be a vertex with minimum degree, this trivial bound is achieved.

In this paper we show that this trivial upper bound is, in fact, the typical exact value of $r_g(G,\HH)$ when $G$ is a random graph, by proving the following theorem.\footnote{Here and later the logarithms have natural base.}

\begin{thm}\label{main}
Let $G\sim G(n,p)$, where $p=p(n)$ satisfies $np - \log n - \log \log n \to \infty$. Then with high probability $r_g(G,\HH) = \delta (G) -1$.
\end{thm}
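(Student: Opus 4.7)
The upper bound $r_g(G,\HH)\le\delta(G)-1$ is already noted in the introduction, so the task is to prove the matching lower bound: w.h.p., for every $H\subseteq E(G)$ with $e(H)\le\delta(G)-2$, the graph $G'=G\setminus H$ is Hamiltonian. I would begin by recording a package of typical properties of $G\sim G(n,p)$ that hold w.h.p.\ under the hypothesis $np-\log n-\log\log n\to\infty$: (i) $\delta(G)\ge 2$; (ii) the set $\sm$ of ``small-degree'' vertices (say, of degree below $np/2$) has size $n^{o(1)}$, and any two of its members lie at $G$-distance at least $4$; (iii) a Pósa-type expansion estimate $|N_G(S)\setminus S|\ge 2|S|+\delta(G)$ for every $S\subseteq V$ with $|S|\le n/4$, strengthened to $|N_G(S)\setminus S|\ge \tfrac{1}{3}|S|\cdot np$ for small $|S|$; and (iv) a discrepancy estimate for edges between large disjoint vertex sets.

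From (i)--(iv) one reads off the structure of $G'$. At every vertex $v$ the adversary removes at most $\delta(G)-2$ edges, so $d_{G'}(v)\ge 2$ always, and $d_{G'}(v)\gg 2$ whenever $v\notin\sm$. Pósa expansion survives in $G'$: each lost neighbor of $S$ must have had some edge to $S$ in $G$ that was removed, so $|N_{G'}(S)\setminus S|\ge |N_G(S)\setminus S|-e(H)\ge 2|S|$ for every $S$ covered by (iii). Thus $G'$ satisfies the expansion and minimum-degree conditions that, for an unperturbed random graph, are known to guarantee Hamiltonicity.

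The Hamilton cycle in $G'$ is then produced by a rotation--extension argument in the spirit of Pósa: start from a longest path, apply rotations (fueled by the inherited expansion) to accumulate a large set of possible endpoints, then use (iv) to either extend the path, contradicting maximality, or close a rotation into a cycle. The main obstacle, and where the argument is most delicate, is the scenario in which the adversary concentrates all $\delta(G)-2$ deletions at a single minimum-degree vertex $v\in\sm$, leaving $v$ with exactly two edges $vu_1,vu_2$ that any Hamilton cycle is forced to use; more generally, several near-minimum-degree vertices might be forced through specific pairs of edges. My plan is to handle such ``rigid'' low-degree vertices by pre-contracting each one into a virtual edge between its two surviving neighbors, running the rotation--extension argument on the contracted graph (whose expansion survives thanks to the separation in (ii)), and then uncontracting. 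Adapting the rotation step to respect the forced virtual edges, and verifying that the separation in (ii) is strong enough to keep these contractions benign, is where the bulk of the technical work lies.
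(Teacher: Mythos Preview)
There is a genuine gap: your property (iii) is false already for singletons. If $S=\{v\}$ with $d_G(v)=\delta(G)$ then $|N_G(S)\setminus S|=\delta(G)<2+\delta(G)$; the ``strengthened'' version $|N_G(S)|\ge\tfrac13|S|\,np$ likewise fails for any $S$ meeting $\sm(G)$, since near the threshold $\delta(G)$ can equal $2$ while $np\to\infty$. Consequently the one-line deduction $|N_{G'}(S)\setminus S|\ge|N_G(S)\setminus S|-e(H)\ge 2|S|$ breaks exactly on the sets that matter --- those containing a vertex the adversary has stripped down to degree $2$. You acknowledge this and propose contracting each such vertex into a forced edge between its two surviving neighbours, but that is not a patch: after contraction you must re-establish $(n/4,2)$-expansion on the new graph, guarantee that rotations never sever a forced edge, and handle the possibility that several near-minimum-degree vertices are simultaneously rigid. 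None of this is carried out, and it is at least as delicate as the problem it is meant to bypass.

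The paper avoids this altogether. It does not assert that $G'=G\setminus H$ is globally an $(n/4,2)$-expander; instead, via a case analysis on $|U\cap\sm(G)|$ and $|U\setminus\sm(G)|$ (using only that vertices of $\sm(G)$ are pairwise at distance $>4$), it shows that $G'$ contains a \emph{sparse} $(n/4,2)$-expander $\Gamma_0$ with at most $d_0n$ edges. Hamiltonicity then comes from boosters rather than from an edge-distribution hypothesis: any non-Hamiltonian $(n/4,2)$-expander has $\Omega(n^2)$ boosters, and a union bound over all sparse $\Gamma\subseteq G$ (this is why sparseness is needed) shows that w.h.p.\ at least one booster survives in $G'\setminus\Gamma$; iterate at most $n$ times. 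No contraction and no property of type (iv) is used to close the cycle. For $p\ge n^{-0.4}$ the paper drops rotation--extension entirely and applies the Chv\'atal--Erd\H{o}s theorem to $G'$ minus a vertex of minimum $G'$-degree.
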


 Ore \cite{ORE} proved that every $n$-vertex graph with at least $\binom{n-1}{2}+2$ edges is Hamiltonian. Restated in terms of the global resilience, this implies that $r_g(K_n,\HH ) = n-2$, and thus the theorem holds for $p=1$.
 
To cover the complete range of $p$, the following corollary is easily derived from Theorem \ref{main}.

\begin{corol*} \label{main-corol}
Let $p(n)\in [0,1]$ and $G\sim G(n,p)$. Then with high probability $r_g(G,\HH) = \max \{0,\delta (G) -1\}$.
\end{corol*}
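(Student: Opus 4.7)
The trivial upper bound $r_g(G,\HH)\le \max\{0,\delta(G)-1\}$ holds deterministically for every $G$ (the $\delta(G)=0$ case being vacuous since then $G$ is non-Hamiltonian and $r_g(G,\HH)=0$), so only the matching lower bound needs to be established w.h.p. My plan is to split the range of $p(n)$ at the Hamiltonicity threshold.

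If $np-\log n-\log\log n\to\infty$, Theorem~\ref{main} directly gives $r_g(G,\HH)=\delta(G)-1$ w.h.p. A first-moment computation on the number of vertices of degree at most one, whose expectation behaves like $e^{-(np-\log n-\log\log n)}$, shows $\delta(G)\ge 2$ w.h.p.\ in this regime, so $\max\{0,\delta(G)-1\}=\delta(G)-1$ and the equation matches.

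For the remaining $p$ I would argue by subsequence extraction. If the corollary fails there is $\epsilon>0$ and a subsequence on which the failure probability is $\ge \epsilon$; passing to a further subsequence, $\omega_n:=np-\log n-\log\log n$ converges to some $c\in[-\infty,\infty)$. When $c=-\infty$, a routine first/second-moment argument on the number of vertices of degree $\le 1$ (whose expectation tends to infinity and is well-concentrated) shows that $\delta(G)\le 1$ w.h.p., so $G$ is non-Hamiltonian and both sides of the target equation vanish, contradicting the failure assumption.

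The real obstacle is the intermediate subcase $c\in\mathbb{R}$, where neither $\{\delta(G)\le 1\}$ nor $\{\delta(G)\ge 2\}$ holds w.h.p. Here I would invoke the classical Ajtai--Komlós--Szemerédi and Bollobás hitting-time theorem, which says that near the threshold, $G\sim G(n,p)$ is Hamiltonian iff $\delta(G)\ge 2$, w.h.p. That disposes of the event $\{\delta(G)\le 1\}$ (both sides equal $0$), so it remains to show $r_g(G,\HH)=\delta(G)-1$ on $\{\delta(G)\ge 2\}$. For this I would use a sprinkling/coupling argument: embed $G\subseteq G^+\sim G(n,p^+)$ with $p^+$ chosen slightly above the Theorem~\ref{main} threshold so that $np^+-\log n-\log\log n\to\infty$; apply Theorem~\ref{main} to obtain $r_g(G^+,\HH)=\delta(G^+)-1$ w.h.p., and transfer the equality back to $G$ by arguing that the additional edges change neither the identity of a minimum-degree vertex on the event $\{\delta(G)\ge 2\}$ nor the existence of the $(\delta(G)-2)$-edge resilience witness at that vertex. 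This coupling/bookkeeping step is the only technical piece.
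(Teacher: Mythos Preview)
Your handling of the sub- and super-critical ranges matches the paper's. The gap is in the critical case $c\in\mathbb{R}$: the sprinkling step you propose does not work, and it is also unnecessary. The inclusion $G\subseteq G^+$ points the wrong way for the lower bound. Theorem~\ref{main} applied to $G^+$ tells you that $G^+\setminus H$ is Hamiltonian whenever $e(H)\le\delta(G^+)-2$, but to conclude the same for $G$ you would need $G\setminus H=G^+\setminus\bigl(H\cup(G^+\setminus G)\bigr)$ to be Hamiltonian, and $e(G^+\setminus G)$ is of order $n^2(p^+-p)$, far exceeding $\delta(G^+)$. Your remark about a ``$(\delta(G)-2)$-edge resilience witness at that vertex'' also reads as if you are transferring the (trivial) upper-bound witness, whereas what is needed for the lower bound is the universal statement that \emph{every} $H$ with $e(H)\le\delta(G)-2$ leaves $G\setminus H$ Hamiltonian; monotonicity of Hamiltonicity under adding edges does not push that downward from $G^+$ to $G$.

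The paper sidesteps all of this with an observation you overlooked: in the critical window one has $\delta(G)\le 2$ w.h.p.\ (a standard moment computation on vertices of degree at most~$2$). Hence on the event $\{\delta(G)\ge 2\}$ one in fact has $\delta(G)=2$, and the Ajtai--Koml\'os--Szemer\'edi/Bollob\'as result you already cite gives that $G$ itself is Hamiltonian, so $r_g(G,\HH)\ge 1=\delta(G)-1$. Combined with the trivial upper bound this finishes the critical case in one line, with no coupling needed.
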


\begin{proof}
Consider separately three ranges of $p$:

\noindent \textbf{Sub-critical:} if $np-\log n - \log \log n \to -\infty$, then with high probability $\delta (G) = 0$ and $G$ is not Hamiltonian, and therefore $r_g(G,\HH) = 0 = \max \{0,\delta (G) -1\}$.

\noindent \textbf{Critical:} if $np-\log n - \log \log n \to c$, then with high probability either $\delta (G) \le 1$, or $\delta (G) = 2$ and $G$ is Hamiltonian (a consequence of Ajtai, Koml{\'o}s, Szemer{\'e}di \cite{AKS85}, Bollob{\'a}s \cite{B84}). In the first case indeed $r_g(G,\HH) = 0 = \max \{0,\delta (G) -1\}$. In the second case, because $G$ is Hamiltonian, $1 \le r_g(G,\HH) \le \delta (G)-1$, as this is true for every Hamiltonian graph. But in this case $\max \{0,\delta (G) -1\} = 1$ and therefore indeed $ r_g(G,\HH)  = \max \{0,\delta (G) -1\}$.

\noindent \textbf{Super-critical:} the case $np-\log n - \log \log n \to \infty$ is covered by Theorem \ref{main}, since in the super-critical regime, with high probability, $\max \{0,\delta (G) -1\} = \delta (G)-1$.

\end{proof}

\subsection*{Related work}

The \emph{local resilience} of a property is a similar notion of resilience, that, with respect to Hamiltonicity in random graphs, has been more thoroughly studied. Denote by $r_{\ell}(G,\PP )$ the local resilience of $G$ with respect to $\PP$, defined as the minimal value of $m$ such that there is a graph $H$ with $\Delta (H) \le m$, and $G\setminus H$ does not satisfy $\PP$. Sudakov and Vu \cite{SV} showed that there is $C>0$ such that, for every $\varepsilon, \delta >0$, if $p \ge \frac{C\log ^{2+\delta}n}{n}$ then with high probability $r_{\ell}(G(n,p),\HH) \ge (1/2-\varepsilon )np$. Closer to the Hamiltonicity threshold, Ben-Shimon, Krivelevich and Sudakov \cite{BKS} showed that if $p\le \frac{1.02\log n}{n}$ is above the Hamiltonicity threshold then, with high probability, $r_{\ell}(G(n,p),\HH ) = \delta (G) -1$. Since it is always true that $r_{\ell}(G,\PP ) \le r_g(G,\PP )$, an immediate consequence of this is that Theorem \ref{main} holds when $p\le \frac{1.02\log n}{n}$.

One can also consider measures of resilience where the limitations on the degrees of the subtracted subgraph $H$ depend on the degrees of $G$. Lee and Sudakov \cite{LS} showed that for every $\varepsilon >0$, if $C>0$ is large enough with respect to $\varepsilon$, $p\ge \frac{C\log n}{n}$ and $G\sim G(n,p)$ then, with high probability, $G\setminus H$ is Hamiltonian for every $H\subseteq G$ such that $\delta (G\setminus H) \ge (1/2+\varepsilon )np$. Another notion of local resilience one can consider is $\alpha$-resilience. We say that $G$ is $\alpha$-resilient with respect to $\PP$ if $G\setminus H$ has $\PP$ for every $H\subseteq G$ such that $d_H(v)\le \alpha\cdot d_G(v)$ for every $v\in V(G)$. Montgomery \cite{MONT}, and independently Nenadov, Steger and Truji{\'c} \cite{NST} showed that in the \emph{random graph process} model $\{G _m \}_{m\ge 0}$, if $m$ is past the hitting time of Hamiltonicity then, with high probability, $G_m$ is $(1/2-\varepsilon )$-resilient. Nenadov et al. additionally extended this result below the hitting time, and showed that the 2-core of $G_m$ is also $(1/2-\varepsilon )$-resilient with respect to Hamiltonicity with high probability, given that $m\ge (1/6+\varepsilon )n\log n$.

For further reading on various measures of resilience of graph properties we refer to Sudakov's survey on the subject \cite{SUDA}.

Alon and Krivelevich \cite{AK} proved that for $G\sim G(n,p)$, $p$ above the Hamiltonicity threshold, one has
$\pr (G \notin \HH) = (1+o(1))\cdot \pr (\delta (G) <2 ).$ Informally, this result suggests that the greatest obstacle  (probability-wise) for a random graph to be Hamiltonian is the minimum degree. In a sense, Theorem \ref{main} shows something similar, by showing that, with high probability, the nearest non-Hamiltonian graph to $G$ indeed has minimum degree less than 2.

\section{Preliminaries} \label{sec-per}

The following graph theoretic notation is used.
For a graph $G=(V,E)$ and two disjoint vertex subsets $U,W\subseteq V$, we let $E_G(U,W)$ denote the set of edges of $G$ adjacent to exactly one vertex from $U$ and one vertex from $V$, and let $e_G(U,W)=|E_G(U,W)|$.
Similarly, $E_G(U)$ denotes the set of edges spanned by a subset $U$ of $V$, and $e_G(U)$ stands for $|E_G(U)|$, and $E_G(v)$ denotes $E_G(\{v\},V\setminus \{v\})$.
The (external) neighbourhood of a vertex subset $U$, denoted by $N_G(U)$, is the set of vertices in $V\setminus U$ adjacent to a vertex of $U$, and for a vertex $v\in V$ we set $N_G(v)=N_G(\{v\})$.
The degree of a vertex $v\in V$, denoted by $d_G(v)$, is its number of incident edges.

While using the above notation we occasionally omit $G$ if the identity of the graph $G$ is clear from the context.

We suppress the rounding notation occasionally to simplify the presentation.

\subsection*{Auxiliary results}

\begin{defi}\label{expander}
Let $\alpha>0$ and $k$ a positive integer. A graph $G$ is a {\em $(k,\alpha )$-expander} if $|N_G(U)|\ge \alpha |U|$ for every vertex subset $U\subset V(G)$, $|U|\le k$.
\end{defi}

\begin{defi}\label{def-booster}
Let $G$ be a graph. A non-edge $\{u,v\}\in E(G)$ is called a {\em booster} if the graph $G^{\prime}$ with edge set $E(G^{\prime})=E(G)\cup \lbrace \{u,v\} \rbrace$ is either Hamiltonian or has a path longer than a longest path of $G$.
\end{defi}

\begin{lemma}\label{lemma-posa}
\emph{(P{\'o}sa 1976 \cite{POS})} Let $G$ be a connected non--Hamiltonian graph, and assume that $G$ is a $(k,2)$--expander. Then $G$ has at least $\frac{(k+1)^2}{2}$ boosters.
\end{lemma}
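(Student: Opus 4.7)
I would follow P\'osa's classical rotation-extension technique. Let $P=v_0v_1\cdots v_\ell$ be a longest path in $G$; by maximality, $N_G(v_0)\cup N_G(v_\ell)\subseteq V(P)$. The strategy is to exhibit at least $(k+1)^2/2$ pairs of vertices that are jointly the endpoints of some longest path in $G$, and then argue that each such pair must be a non-edge of $G$ and a booster.

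\textbf{Rotations.} Fix $v_\ell$ as a stationary endpoint and iteratively apply P\'osa rotations starting from $P$: whenever the current path reads $w_0w_1\cdots w_\ell$ and $w_0w_i\in E(G)$ for some $2\le i\le \ell-1$, replace it by $w_{i-1}w_{i-2}\cdots w_0w_iw_{i+1}\cdots w_\ell$, giving $w_{i-1}$ as the new mobile endpoint. Let $R$ be the set of all vertices that arise as a mobile endpoint through such rotation sequences. The standard bookkeeping shows that any $x\in N_G(R)\setminus R$ must serve as the immediate successor (towards $v_\ell$) of some $r\in R$ on some intermediate path, whence $|N_G(R)\setminus R|<2|R|$; combined with the $(k,2)$-expansion hypothesis, this forces $|R|\ge k+1$. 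For each $r\in R$, fix a longest path $P_r$ with endpoints $r$ and $v_\ell$, and repeat the same rotation procedure on $P_r$ while now keeping $r$ stationary. This yields a set $R_r$ of size at least $k+1$ such that every $r'\in R_r$ is the mobile endpoint of some longest path whose other endpoint is $r$.

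\textbf{Boosters.} I claim that every pair $\{r,r'\}$ with $r\in R$ and $r'\in R_r$ is a non-edge of $G$ and is a booster. Indeed, let $P_{r,r'}$ be a longest path with endpoints $r,r'$. If $\{r,r'\}\in E(G)$ then $P_{r,r'}\cup\{r,r'\}$ is a cycle $C$ on $\ell+1$ vertices; if $C$ were Hamiltonian we would contradict the hypothesis that $G\notin \HH$, and otherwise the connectivity of $G$ produces a vertex outside $C$ with a neighbour on $C$, which may be spliced into $C$ to form a path strictly longer than $P$, contradicting maximality. Hence $\{r,r'\}\notin E(G)$, and adding it creates exactly the cycle $C$, to which the same dichotomy applies: the augmented graph either contains a Hamilton cycle or admits a path strictly longer than $P$. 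Thus $\{r,r'\}$ is a booster.

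\textbf{Counting and main difficulty.} The number of ordered pairs $(r,r')$ with $r\in R$ and $r'\in R_r$ is at least $(k+1)^2$, and each unordered booster is counted at most twice (as $(r,r')$ and $(r',r)$), yielding at least $(k+1)^2/2$ distinct boosters. The delicate ingredient of the argument is the bookkeeping in the rotation step that produces the inequality $|N_G(R)\setminus R|<2|R|$: one must verify that the union, taken over all intermediate rotated paths, of the ``forbidden successor'' vertices for members of $R$ simultaneously absorbs every external neighbour of $R$ in $G$. This is the core of P\'osa's lemma and the only non-mechanical step; once it is in hand, the booster criterion and the double-counting above finish the proof.
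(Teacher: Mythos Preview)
The paper does not supply its own proof of this lemma; it is quoted as a classical auxiliary result and attributed to P\'osa~\cite{POS}. Your proposal is precisely the standard rotation--extension argument that underlies P\'osa's lemma, and it is correct as outlined: build the endpoint set $R$ by rotations with $v_\ell$ fixed, use the bookkeeping inequality $|N_G(R)\setminus R|<2|R|$ together with $(k,2)$-expansion to force $|R|\ge k+1$, repeat from each $r\in R$ to obtain $|R_r|\ge k+1$, observe that every resulting endpoint pair is a non-edge and a booster, and double-count. The one step you flag as ``delicate'' is indeed the crux, and your description of it (external neighbours of $R$ are absorbed by path-successors of $R$) is the right idea, though in a full write-up one typically pins the successor/predecessor structure to the \emph{original} path $P$ rather than to arbitrary intermediate paths, which is what makes the $2|R|$ bound go through cleanly.
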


\begin{defi}\label{def-hamconn}
A graph $G$ is \emph{Hamilton--connected} if for every two vertices $u,v\in V(G)$, $G$ contains a Hamilton path with $u,v$ as its two endpoints.
\end{defi}

\begin{thm}\label{chvatal-erdos}
{\em (Chv{\'a}tal--Erd\H{o}s Theorem \cite{CE})} Let $G=(V,E)$ be a graph such that $\alpha (G) < \kappa (G)$. Then $G$ is Hamilton--connected.
\end{thm}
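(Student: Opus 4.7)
The plan is to adapt the classical Chv\'atal--Erd\H{o}s longest-cycle argument to the setting of longest paths with prescribed endpoints. Fix distinct vertices $u,v\in V$ and let $P=p_0 p_1\cdots p_\ell$, with $p_0=u$ and $p_\ell=v$, be a longest $u$--$v$ path in $G$; the goal is to show $V(P)=V(G)$. Suppose otherwise and choose $w\in V(G)\setminus V(P)$. Set $k=\kappa(G)>\alpha(G)$. Menger's theorem applied in the $k$-connected graph $G$ produces $k$ internally disjoint paths $R_1,\dots,R_k$ from $w$ to $V(P)$, each meeting $V(P)$ only at its terminal vertex, with distinct terminals $p_{i_1},\dots,p_{i_k}$, $i_1<\cdots<i_k$; in particular, the existence of $k$ such paths forces $|V(P)|\ge k$, which I would record as a preliminary observation.

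\smallskip

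\noindent I would then exhibit an independent set of size $k+1$ in $G$, contradicting $\alpha(G)<k$. For each $j$ define a shift $q_j$ by default as the predecessor $p_{i_j-1}$, replaced by the successor $p_{i_j+1}$ when $i_j=0$. Set $I:=\{w,q_1,\dots,q_k\}$. Independence of $I$ reduces to two rerouting moves, each turning a forbidden edge into a $u$--$v$ path strictly longer than $P$, contradicting the maximality of $P$. An edge $w p_{i_j-1}$ would splice $w$ into $P$ at position $i_j$ via $R_j$, producing
\[
p_0\,p_1\cdots p_{i_j-1}\,w\,R_j\,p_{i_j}\,p_{i_j+1}\cdots p_\ell,
\]
which gains the interior of $R_j$ over $P$. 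An edge $p_{i_j-1}p_{i_{j'}-1}$ with $j<j'$ would instead enable the longer detour
\[
p_0\cdots p_{i_j-1}\,p_{i_{j'}-1}\,p_{i_{j'}-2}\cdots p_{i_j}\,R_j^{-1}\,w\,R_{j'}\,p_{i_{j'}}\,p_{i_{j'}+1}\cdots p_\ell,
\]
gaining the interiors of both $R_j$ and $R_{j'}$. The successor-shift and mixed variants admit completely analogous constructions.

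\smallskip

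\noindent Finally, I would verify $|I|=k+1$, i.e.\ that the shifts are pairwise distinct. This is immediate whenever every $i_j\ge 1$, and symmetrically when every $i_j\le\ell-1$. The only delicate situation is the simultaneous occurrence of $i_1=0$ and $i_k=\ell$: I would then set $q_1=p_1$, $q_k=p_{\ell-1}$, take predecessors for the interior indices by default, and flip a single intermediate predecessor to its successor whenever this resolves a collision with $p_1$ or $p_{\ell-1}$. The same rerouting lemmas still certify independence of the swapped set, and any residual configuration with $\ell$ very small is ruled out by $|V(P)|\ge k$. I expect this endpoint bookkeeping to be the principal technical obstacle; the rerouting constructions themselves are routine once one commits to the path-with-endpoints analogue of the cycle argument. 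Once the distinctness is settled, the existence of $k+1$ pairwise non-adjacent vertices in $G$ directly contradicts $\alpha(G)<k$, so every longest $u$--$v$ path in $G$ is Hamilton, establishing that $G$ is Hamilton-connected.
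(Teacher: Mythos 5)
The paper does not prove this statement; it is quoted as a known theorem of Chv\'atal and Erd\H{o}s \cite{CE}, so there is no in-paper proof to compare against. Evaluating your proposal on its own terms: the overall strategy (longest $u$--$v$ path, a vertex $w$ off the path, a Menger fan from $w$ into $V(P)$, and an independent set built from shifted fan endpoints plus $w$) is the natural adaptation of the classical Chv\'atal--Erd\H{o}s cycle argument. However, the place you flag as ``the principal technical obstacle'' is in fact a genuine gap, not mere bookkeeping.

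Concretely, in the simultaneous case $i_1=0$ and $i_k=\ell$, you propose $q_1=p_1$ and $q_k=p_{\ell-1}$ and assert that ``the same rerouting lemmas still certify independence.'' They do not. Suppose $p_1p_{\ell-1}\in E(G)$. The available structure is the path $P$, the edge $p_1p_{\ell-1}$, and the fan paths (in particular $R_1\colon w\to p_0$ and $R_k\colon w\to p_\ell$). Every way of traversing these pieces that starts at $p_0$ and ends at $p_\ell$ either reproduces $P$ or terminates at an interior vertex: for instance $p_0\,R_1^{-1}\,w\,R_k\,p_\ell\,p_{\ell-1}\,p_1\,p_2\cdots$ dead-ends at $p_{\ell-2}$, and $p_0\,p_1\,p_{\ell-1}\,p_{\ell-2}\cdots$ dead-ends at $p_2$. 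So the presence of the edge $p_1p_{\ell-1}$ does not contradict the maximality of $P$ as a $u$--$v$ path, and hence your set $I$ is not certified independent in this configuration. (Intuitively, the edge $p_1p_{\ell-1}$ together with $R_1\cup R_k\cup P$ only yields a longer cycle, not a longer path with the prescribed endpoints $u,v$.) This is precisely why the Hamilton-connected version requires the strict inequality $\kappa>\alpha$ and a more careful construction than a direct predecessor/successor relabeling; the endpoint interactions need a different argument, not a local swap.

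A second, smaller issue: you write that ``the existence of $k$ such paths forces $|V(P)|\ge k$,'' but the fan form of Menger's theorem only produces $k$ paths when $|V(P)|\ge k$ in the first place; with $|V(P)|<k$ it produces only $|V(P)|$ of them. As stated the observation is circular. The bound $|V(P)|\ge k$ does need its own (short) justification before the fan lemma can be invoked with parameter $k$.
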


\subsection*{Useful inequalities}

\begin{lemma}\label{coefficients}
Let $1\leq l \leq k \le n$ be integers. Then the following inequalities hold:
\begin{enumerate}
\item $\binom{n}{k} \le \left( \frac{en}{k} \right) ^k$\,;
\item $\frac{\binom{n-l}{k}}{\binom{n}{k}} \le e^{-\frac{l\cdot k}{n}}$\,.
\end{enumerate}
\end{lemma}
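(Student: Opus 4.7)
Both inequalities are standard, so the plan is to record the short chain of estimates one uses in each case; I do not expect a real obstacle here.

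For part (1), my plan is to start from the definition $\binom{n}{k}=\frac{n!}{k!(n-k)!}$ and use the crude bound $\frac{n!}{(n-k)!}=n(n-1)\cdots(n-k+1)\le n^{k}$, which gives $\binom{n}{k}\le \frac{n^{k}}{k!}$. Then I would lower bound $k!$ using the Taylor series of the exponential: since $e^{k}=\sum_{i\ge 0}\frac{k^{i}}{i!}\ge \frac{k^{k}}{k!}$, we obtain $k!\ge (k/e)^{k}$. Substituting yields $\binom{n}{k}\le \frac{n^{k}}{(k/e)^{k}}=\left(\frac{en}{k}\right)^{k}$, as required.

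For part (2), I would expand the ratio as a telescoping product:
\[
\frac{\binom{n-l}{k}}{\binom{n}{k}}=\frac{(n-l)!\,(n-k)!}{n!\,(n-l-k)!}=\prod_{i=0}^{k-1}\frac{n-l-i}{n-i}.
\]
Each factor equals $1-\frac{l}{n-i}$, and since $n-i\le n$ one has $\frac{l}{n-i}\ge \frac{l}{n}$, hence $1-\frac{l}{n-i}\le 1-\frac{l}{n}$. Applying the elementary inequality $1-x\le e^{-x}$ (valid for all real $x$) with $x=l/n$, each factor is at most $e^{-l/n}$. Multiplying the $k$ factors yields the claimed bound $e^{-lk/n}$.

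The only mild sanity check is to ensure the indices stay in the valid range (the hypothesis $1\le l\le k\le n$, together with $l+k\le n$ which is needed for $\binom{n-l}{k}$ to be nonzero in a meaningful way); if $l+k>n$ then the left-hand side of (2) is zero and the inequality is trivial, so one can safely restrict to $l+k\le n$ in the telescoping argument. No serious obstacle is anticipated in either part.
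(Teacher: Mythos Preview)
Your proof is correct in both parts and follows the standard approach; the paper itself states this lemma without proof (it is listed under ``Useful inequalities'' as a standard fact), so there is no paper proof to compare against. Your handling of the edge case $l+k>n$ in part~(2) is also appropriate.
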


\begin{lemma}\label{binom-rv}
Let $1\le k \le n$ be integers, $0 < p < 1$, and let $X\sim Bin(n,p)$. Then the following inequalities hold:

\begin{enumerate}
\item $\pr (X \ge k) \le \left( \frac{enp}{k} \right) ^k$\,;
\item $\pr (X = k) \le \left( \frac{enp}{k(1-p)} \right) ^k\cdot e^{-np}$\,.
\end{enumerate}

\noindent If, additionally, $k\le np$, then 

\begin{enumerate}[resume]
\item $\pr (X \le k) \le (k+1)\cdot \left( \frac{enp}{k(1-p)} \right) ^k\cdot e^{-np}$\,.
\end{enumerate}

\end{lemma}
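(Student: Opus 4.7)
The plan is to prove the three bounds in sequence, each reducing in a few lines to the estimates in Lemma \ref{coefficients}. For (1), I would use a first-moment trick: writing $X = \sum_{i=1}^n X_i$ as a sum of independent Bernoulli$(p)$ indicators, linearity of expectation immediately gives $\mathbb{E}\bigl[\binom{X}{k}\bigr] = \binom{n}{k} p^k$, since each $k$-subset of trials contributes $p^k$. Because $X \ge k$ implies $\binom{X}{k} \ge 1$, Markov's inequality yields $\pr(X \ge k) \le \binom{n}{k} p^k$, and Lemma \ref{coefficients}(1) upgrades the binomial coefficient to $(en/k)^k$, producing the stated bound.

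For (2), I would expand $\pr(X = k) = \binom{n}{k} p^k (1-p)^{n-k}$ directly, bound $\binom{n}{k}$ by $(en/k)^k$ via Lemma \ref{coefficients}(1), and then split $(1-p)^{n-k} = (1-p)^n \cdot (1-p)^{-k}$. The elementary estimate $1-p \le e^{-p}$ gives $(1-p)^n \le e^{-np}$, and combining these factors produces the bound exactly. I would retain $(1-p)^{-k}$ in the denominator rather than absorb it into the exponential, so that the bound remains meaningful even when $p$ is bounded away from $0$ (the range relevant for later applications).

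For (3), the idea is that under the assumption $k \le np$, the binomial pmf $\pr(X = j)$ is non-decreasing on $\{0, 1, \ldots, k\}$. This is seen by computing
\[
\frac{\pr(X = j)}{\pr(X = j - 1)} \;=\; \frac{(n - j + 1)\,p}{j\,(1-p)},
\]
which is at least $1$ whenever $j \le (n+1)p$; since $k \le np \le (n+1)p$, this inequality holds for all $1 \le j \le k$. Consequently $\pr(X \le k) \le (k+1)\,\pr(X = k)$, and substituting the bound from (2) gives the claim.

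All three parts are standard tail and point-mass estimates, so no genuine obstacle arises; the only care needed is to match the precise form in which the bounds are stated (keeping the factor $(1-p)$ explicit in the denominator and $e^{-np}$ as the leading exponential), since this is the shape in which they will be invoked later in the paper.
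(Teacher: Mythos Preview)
Your argument for all three parts is correct and standard. Note, however, that the paper does not actually supply a proof of this lemma: it is listed under ``Useful inequalities'' in the preliminaries and stated without proof, as a standard estimate. So there is no paper proof to compare against; your write-up would serve perfectly well as the omitted justification.
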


\section{Proof of Theorem \ref{main}}

In this section we present a proof of Theorem \ref{main}. We prove it separately for two different ranges of the probability $p$, as the typical properties of the random graph in these two regimes is fairly different. In Section \ref{sparse} we prove the claim in the sparse regime $p\le n^{-0.4}$, and in Section \ref{dense} we prove it the the dense regime $n^{-0.4} \le p \le 1$.

\subsection{Sparse case} \label{sparse}

\begin{thm} \label{thm-sprs-case}
Let $G\sim G(n,p)$ where $np-\log n - \log \log n \to \infty$ and $p\le n^{-0.4}$. Then with high probability $r_g(G,\HH) = \delta (G) -1$.
\end{thm}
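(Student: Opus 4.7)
The plan is to apply P{\'o}sa's rotation-extension technique to show that, for every $H\subseteq G$ with $|H|\le \delta(G)-2$, the graph $G':=G\setminus H$ is Hamiltonian. The first step is immediate: $d_{G'}(v) \ge d_G(v)-|H|\ge 2$ for every vertex $v$, so $\delta(G') \ge 2$.

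I would establish, w.h.p., the following structural properties of $G \sim G(n,p)$, exploiting the sparse range $p \le n^{-0.4}$: (a) concentration of the degree sequence, and in particular the set $L$ of low-degree vertices (say $d_G(v) \le np/2$) is of size $o(n/\log n)$ and independent in $G$; (b) strong P{\'o}sa expansion --- every $U\subseteq V$ with $|U|\le k := n/10$ satisfies $|N_G(U)|\ge 3|U|$; (c) edge distribution between large disjoint sets --- $e_G(A,B) \ge \tfrac12 |A||B|p$ for every disjoint $A, B$ with $|A|, |B|\ge k+1$; (d) sparsity of small subgraphs --- no $s$-set spans more than $s$ edges for $s \le \sqrt n$.

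Using these, the argument then proceeds in two main steps. First, the minimum-degree bound together with (a), (b), (d) imply that $G'$ is a connected $(k,2)$-expander; the case analysis on $|U|$ and the overlap $|U\cap L|$ proceeds as follows. For $|U|$ of medium or large size, $G$-expansion is so strong that removing $|H|$ edges is negligible; for $|U|=1$ it is exactly the minimum-degree bound. The delicate subcase is small $U$ with many vertices of $L$, where the sparsity (d) and independence of $L$ control the edges $H$ can remove from $N_G(U)$. Second, I would apply Lemma~\ref{lemma-posa} to $G'$: were it not Hamiltonian, the rotation-extension procedure would produce a set $A\subseteq V$ with $|A|\ge k+1$ and, for each $a\in A$, a set $B_a\subseteq V$ with $|B_a|\ge k+1$, such that every pair $\{a,b\}$ with $a\in A,\, b\in B_a$ is a non-edge of $G'$. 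Consequently every $G$-edge from $a$ into $B_a$ lies in $H$, which sums to $\sum_{a\in A} e_G(\{a\}, B_a) \le 2|H|$. The typical value of this sum, by a concentration argument based on the edge distribution (c), is at least $\tfrac12 (k+1)^2 p \gg \delta(G) \ge |H|$ in our range, giving the required contradiction.

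The step I expect to be the main obstacle is closing this last contradiction rigorously: the sets $A$ and $\{B_a\}$ depend on $G$ and $H$ through the full structure of $G'$, and a brute-force union bound over all such subset choices is too weak when $p$ is close to the Hamiltonicity threshold. A natural remedy is a two-round exposure $G = \tilde G \cup G_r$ with an independent reservoir $G_r$, iterating the boosting by adding boosters from $G_r\setminus H$ to a spanning expander $F_0\subseteq G'$: at each step a Chernoff estimate on the independent $G_r$ guarantees a booster in $G_r \setminus H$, and the union bound is taken only over the $O(n)$ iterations. Balancing the parameters of this exposure so that $\tilde G$ retains the structural properties above, $G_r$ provides a reliable reservoir, and the starting subgraph $F_0$ has $\delta(F_0)\ge 2$ --- the last point sometimes requiring an augmentation of $F_0$ by a few $G_r$-edges at vertices whose $\tilde G$-degree drops low --- is the technical core of the proof.
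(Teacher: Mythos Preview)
Your diagnosis of the obstacle is right: the booster sets produced by P{\'o}sa depend on $H$ and on all of $G$, so a direct union bound is too weak near the threshold. But the two-round exposure remedy does not fix this. The adversary chooses $H$ \emph{after} seeing all of $G=\tilde G\cup G_r$, so $H$---and hence $F_0\subseteq G\setminus H$ and every subsequent $F_i$---is already a function of $G_r$; you therefore cannot invoke ``the independent $G_r$'' in the Chernoff step. Taking the union bound ``only over the $O(n)$ iterations'' tacitly assumes the current expander $F_i$ is fixed before $G_r$ is revealed, which it is not. (Splitting $G_r$ into $n$ thinner sprinkles does not help either: each round then carries only $\Theta(np)$ expected boosters, which an $H$ of size $\delta(G)-2$ can absorb entirely.)

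The paper closes the gap by \emph{sparsifying} before boosting, which is the idea your outline is missing. For every $H$ one first extracts a sparse $(n/4,2)$-expander $\Gamma_0\subseteq G\setminus H$ with at most $d_0 n$ edges, where $d_0=0.001\,np$, by letting each vertex keep a random $d_0$-subset of its surviving edges. One then proves, via a union bound over \emph{all} graphs $\Gamma$ on $[n]$ with at most $2d_0 n$ edges, that w.h.p.\ every non-Hamiltonian $(n/4,2)$-expander $\Gamma\subseteq G$ in this range has more than $\delta(G)$ of its boosters present as $G$-edges---so at least one survives in $G\setminus H$, for every $H$. The enumeration cost $\sum_{k\le 2d_0 n}\binom{\binom{n}{2}}{k}p^{k}$ is beaten by the $e^{-\Omega(n^2p)}$ booster estimate precisely because $d_0 n\ll n^2p$; no multi-round exposure is needed, and no union bound over $H$ is taken. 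Two smaller points: your property~(b) is false as stated near the threshold (a minimum-degree vertex can have $|N_G(\{v\})|=2<3$), and mere independence of $L$ is not strong enough for the small-$|U|$ expansion case; the paper uses that no two low-degree vertices lie within distance~$4$.
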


\begin{proof}

Let $d_0 = 0.001 np$ and $\sm (G) = \{ v\in V(G) \mid d_G(v) < d_0 \}$.

\begin{lemma} \label{lemma:props}
With high probability $G$ has the following properties.

\begin{enumerate}[{label=\textbf{(P\arabic*)}}]
    \item
      \label{item:minmax-degrees}
      $\delta (G) \ge 2$ and $\Delta (G) \le 5np$;
    \item
      \label{item:SMALL-is-small}
      $|\sm(G)| \le n^{0.1}$;
    \item
      \label{item:SMALL-are-far}
      $G$ does not contain a path of length at most 4 with both its endpoints in $\sm(G)$;
    \item
      \label{item:not-too-many-edges}
      every $U\subseteq V(G)$ with $\frac{1}{2}d_0 \le |U| \le \frac{5n}{\sqrt{np}}$ spans at most $\frac{1}{15}d_0|U|$ edges;
    \item
      \label{item:not-few-many-edges}
      if $U,W\subseteq V(G)$ are disjoint and $|U|= |W| = \frac{n}{\sqrt{np}}$ then $e(U,W) \ge n/2$.
\end{enumerate}

\end{lemma}

\begin{proof}

For each of the given properties, we bound the probability that $G\sim G(n,p)$ fails to uphold it.

\begin{itemize}

	\item[\ref{item:minmax-degrees}.]
		Since $p$ is above the Hamiltonicity threshold, which is equal to the threshold of the property $\delta (G) \ge 2$, the first part is obvious. For the second part, by the union bound we get
		$$
		\pr \left( \Delta (G) \ge 5np \right) \le n\cdot \pr \left( \Bin (n-1,p) \ge 5np \right) \le n\cdot \left( \frac{e(n-1)p}{5np} \right) ^{5np} \le  n^{-2}.
		$$

    \item[\ref{item:SMALL-is-small}.]
      The probability that $|\sm(G)| \ge n^{0.1}$ is at most the probabiliy that there is a set of size $s \coloneqq n^{0.1}$ with less than $d_0\cdot s$ outgoing edges. Therefore
      
	\begin{eqnarray*}
	\pr \left( |\sm (G)| \ge n^{0.1} \right) & \le & \binom{n}{s}\cdot \pr \left( \Bin \left( s(n-s), p \right) < d_0 \cdot s \right) \\
	& \le & \binom{n}{s}\cdot d_0s\cdot \pr \left( \Bin \left( s(n-s), p \right) = d_0 \cdot s \right) \\
	& \le & \left( \frac{en}{s} \right) ^s \cdot d_0s \cdot \left( \frac{es(n-s)p}{d_0s(1-p)} \right) ^{d_0s}\cdot e^{-s(n-s)p} \\
	& \le & n^{0.9s} \cdot d_0s \cdot 3000 ^{d_0s}\cdot e^{-0.95\cdot snp} \\
	& \le & \exp \left( s\cdot \log n \cdot \left( o(1) + 0.9 + 0.001\cdot \log 3000 - 0.95 \right) \right) \\
	& = & o(1).
	\end{eqnarray*}
      
    \item[\ref{item:SMALL-are-far}.]
      Given $u,v\in V(G)$ and a path $P$ of length $\ell$ between them, the probability that $u,v\in \sm (G)$ and $P\subseteq G$ is at most the probability that $P\subseteq G$ and $\{ u,v \}$ has less than $2d_0$ outgoing edges that are not part of $P$, which is at most
      
      $$
      p^{\ell}\cdot 2d_0 \cdot \left( \frac{2enp}{2d_0(1-p)} \right) ^{2d_0}\cdot e^{-2(n-3)p} \le  p^{\ell}\cdot e^{-1.9np}.
      $$
      By the union bound, the probability that there is a path $P\subseteq G$ of length at most 4 with both endpoints in $\sm (G)$ is at most
      $$
      \sum _{\ell = 1}^4 \binom{n}{\ell +1} p^{\ell}\cdot e^{-1.9np} = o(1).
      $$
      
    \item[\ref{item:not-too-many-edges}.]
      The probability that there is a set $U\subseteq V(G)$ of size $k\ge \frac{1}{2}d_0$ that contradicts \ref{item:not-too-many-edges} is at most
      
      $$
       \binom{n}{k}\cdot \pr \left( \Bin \left( \binom{k}{2},p \right) \ge \frac{1}{15}d_0k \right) \le 
       \left( \frac{en}{k} \right) ^k \cdot \left( \frac{15ek^2p}{2d_0k} \right) ^{0.1d_0k} \le (np)^{-0.04d_0k},
      $$
      
      where the last inequality is due to the fact that $k\le \frac{5n}{\sqrt{np}}$. Therefore, the probability that $G$ does not have \ref{item:not-too-many-edges} is at most
      $$
      \sum _{k=d_0/2}^{5n/\sqrt{np}} (np)^{-0.04d_0k} = (1+o(1))(np)^{-0.02{d_0}^2} = o(1).
      $$
      
    \item[\ref{item:not-few-many-edges}.]
      By the union bound, the probability that there are $U,W\subseteq V(G)$ of size $\frac{n}{\sqrt{np}}$ with less than $\frac{1}{2}n$ edges between them is at most
      
      $$
      \binom{n}{\frac{n}{\sqrt{np}}}^2\cdot \frac{1}{2}n \cdot \pr \left( \Bin \left( \frac{n}{p},p \right) = \frac{1}{2}n \right) \le n\cdot \left( enp \right) ^{\frac{2n}{\sqrt{np}}} \cdot \left( \frac{2en}{(1-p)n} \right)^{\frac{1}{2}n}\cdot e^{-n} = o(1).
      $$
      
\end{itemize}

\end{proof}

\begin{lemma} \label{lemma:expander}
With high probability, for every subgraph $H\subseteq G$ with $e(H) = \delta (G) -2$, the graph $G\setminus H$ contains a subgraph $\Gamma _0$ that is an $\left( \frac{n}{4},2 \right)$-expander with at most $d_0n$ edges.
\end{lemma}

\begin{proof}

We prove this by showing that if $G$ satisfies properties \ref{item:minmax-degrees}-\ref{item:not-few-many-edges} then, for every $H\subseteq G$ with $\delta (G) -2$ edges, $G\setminus H$ contains a subgraph $\Gamma _0$ as required. To this end we consider a random subgraph of $G\setminus H$ with at most $d_0n$ edges and show that it is an $\left( \frac{n}{4},2 \right)$-expander with positive probability, which implies existence.

Construct a random subgraph of $G\setminus H$ as follows. For every $v\in V(G)$ set $E_v$ to be $E_{G\setminus H}(v)$ in the case $d_{G\setminus H}(v) \le d_0$, and otherwise set $E_v$ to be a subset of $E_{G\setminus H}(v)$ of size $d_0$, chosen uniformly at random and independently of all other choices. The random subgraph $\Gamma$ is the $G\setminus H$-subgraph with edge set $\bigcup _{v\in V(G)}E_v$. Observe that the minimum degree of a graph $\Gamma$ drawn this way is at least $\min \{\delta (G\setminus H),d_0 \} \ge 2$, that $d_{\Gamma}(v)=d_{G\setminus H}(v)$ for every $v\in \sm (G)$, and that $e(\Gamma ) \le d_0n$.

We bound from above the probability that $\Gamma$ contains a subset $U$ with at most $n/4$ vertices with less than $2|U|$ neighbours. Let $|U|=k\le \frac{n}{4}$ and denote $U_1 = U\cap \sm (G),U_2 = U\setminus U_1$ and $k_1,k_2$ the sizes of $U_1,U_2$ respectively. Observe that \ref{item:SMALL-are-far} implies that \emph{(i)} every vertex in $U_2$ has at most one neighbour in $U_1 \cup N_G(U_1)$, and therefore $|N_{\Gamma}(U_2) \cap (U_1 \cup N_{\Gamma}(U_1)| \le k_2$; and \emph{(ii)} distinct vertices in $\sm (G)$ have non-intersecting neighbourhoods, and therefore $|N_{\Gamma}(U_1)| \ge \delta(\Gamma)\cdot k_1 \ge 2k_1$.

First we show that if $k_2 \le \frac{n}{\sqrt{np}}$ then $|N_{\Gamma}(U)|\ge 2|U|$ with probability 1. We separate the proof into different cases according to the value of $k_2$.

\begin{enumerate}

\item
$k_2=1$.
If $k_1=0$ then $U$ is a singleton, and $N_{\Gamma}(U)$ contains at least two vertices since $\delta (\Gamma) \ge 2$.

Otherwise, $k_1>0$ and in particular $\sm (G)$ is not empty, so $\delta (G) <d_0$ and
\begin{eqnarray*}
|N_{\Gamma}(U)| & \ge & |N_{\Gamma}(U_1)\setminus U_2| + |N_{\Gamma}(U_2)\setminus (N_{\Gamma}(U_1) \cup U_1) | \\
& \ge & \delta (\Gamma )\cdot k_1 -1 + d_0-(\delta (\Gamma ) -2) - 1 \\
& \ge & 2k_1+2=2|U|.
\end{eqnarray*}

\item
$2\le k_2 \le \frac{1}{10}d_0$.
Since there are at least two vertices, there is a vertex $v\in U_2$ such that
$$
d_{G\setminus H}(v) \ge d_G(v)-\frac{1}{2}(\delta (G)-2) -1 \ge \frac{1}{2}d_G(v) \ge \frac{1}{2}d_0,
$$ 
and therefore also $d_{\Gamma}(v) \ge \frac{1}{2}d_0$, and $e_{\Gamma}(v,V(G)\setminus U_2) \ge \frac{1}{2}d_0-k_2 \ge \frac{2}{5}d_0$. We get
\begin{eqnarray*}
|N_{\Gamma}(U)| & \ge & |N_{\Gamma}(U_1)\setminus U_2| + |N_{\Gamma}(U_2)\setminus (N_{\Gamma}(U_1) \cup U_1) | \\
& \ge & 2 k_1 - k_2 + \frac{2}{5}d_0- k_2 \\
& \ge & 2k_1+2k_2=2|U|.
\end{eqnarray*}

\item
$\frac{1}{10}d_0 \le k_2 \le \frac{n}{\sqrt{np}}$. In this case $|N_{\Gamma}(U_2)|\ge 4k_2$. Indeed, if $|N_{\Gamma}(U_2)|\le 4k_2$ then $U_2 \cup N_{\Gamma}(U_2)$ is contained in a set of size $5k_2$, which is between $\frac{1
}{2}d_0$ and $\frac{5n}{\sqrt{np}}$, that spans at least $\frac{1}{2}d_0k_2-e(H)\ge \frac{1}{15}d_0\cdot \left( 5k_2 \right)$ edges in $G$, a contradiction to \ref{item:not-too-many-edges}. We get 
\begin{eqnarray*}
|N_{\Gamma}(U)| & \ge & |N_{\Gamma}(U_1)\setminus U_2| + |N_{\Gamma}(U_2)\setminus (N_{\Gamma}(U_1) \cup U_1) | \\
& \ge & 2 k_1 - k_2 + 4k_2- k_2 \\
& \ge & 2k_1+2k_2=2|U|.
\end{eqnarray*}

\end{enumerate}

For the remaining case $\frac{n}{\sqrt{np}}\le k_2 \le \frac{n}{4}$ we show that $|N_{\Gamma}(U)| \ge 2|U|$ with positive probability. Indeed, assume that $|N_{\Gamma}(U)| < 2|U|$, then $|V(G)\setminus (U \cap N_{\Gamma}(U))| \ge \frac{1}{5}n$. In particular, there are disjoint sets $U'\subseteq U$ and $W\subseteq V(G)\setminus (U \cap N_{\Gamma}(U))$, each of size $\frac{n}{\sqrt{np}}$, such that $e_{\Gamma}(U',W)=0$. Observe that by \ref{item:not-few-many-edges}, $e_{G\setminus H}(U',W) \ge \frac{1}{2}n-\delta(G) \ge \frac{1}{3}n$. For a given pair of subsets $U',W$, the probability for this is at most

\begin{eqnarray*}
\prod_{u\in U'}\pr (e_{\Gamma}(u,W)=0) & \le & \prod _{u \in U'} \frac{\binom{d_{G\setminus H}(u) - e_{G\setminus H}(u,W)}{d_0}}{\binom{d_{G\setminus H}(u)}{d_0}} \\
& \le & \prod _{u \in U'} e^{-\frac{d_0\cdot e_{G\setminus H}(u,W)}{d_{G\setminus H}(u)}}\\
& \le & \exp \left( {-\frac{d_0}{\Delta (G)}\cdot e_{G\setminus H}(U,W)} \right) \\
& \le & \exp \left( -\frac{1}{15000}n \right) ,
\end{eqnarray*}
Where in the last inequality we used the fact that $G$ has \ref{item:minmax-degrees}, and therefore $\Delta (G) \le 5np$.

Since there are $\exp (o(n))$ pairs of subsets $U',W$ of size $\frac{n}{\sqrt{np}}$, by the union bound the probability that two subsets of this size with no edges between them in $\Gamma$ exist is of order $o(1)$. Consequently, the random subgraph $\Gamma$ is an $\left( \frac{n}{4},2 \right)$-expander with probability $1-o(1)$, implying that $G\setminus H$ contains a sparse expander, as claimed.

\end{proof}

\begin{lemma}\label{lemma:boosters}
With high probability, for every subgraph $H\subseteq G$ with $e(H) = \delta (G) -2$ and every non-Hamiltonian $\left( \frac{n}{4},2 \right)$-expander $\Gamma \subseteq G$ with $e(\Gamma )\le 2d_0n$, the graph $G\setminus (H\cup \Gamma )$ contains a booster with respect to $\Gamma$.
\end{lemma}

\begin{proof}
By Lemma \ref{lemma-posa}, a non-Hamiltonian $\left( \frac{n}{4},2 \right)$-expander has at least $\frac{n^2}{32}$ boosters. For a given subgraph $H$ with $\delta (G)-2$ edges and a given expander $\Gamma$, the probability that none of the many boosters are in $G\setminus H$ is at most

$$
\pr \left( \Bin \left( \frac{n^2}{32},p \right) \le e(H) \right) \le \delta (G) \cdot \left( \frac{en^2p}{32\cdot\delta (G)\cdot (1-p)} \right) ^{\delta (G)} \cdot e^{-\frac{n^2p}{32}} \le e^{-\frac{n^2p}{33}}.
$$

By the union bound, the probability that there is an expander subgraph $\Gamma \subseteq G$ with at most $2d_0n$ edges, and no boosters with respect to $\Gamma$ in $G$, is at most

$$
\sum _{k=1}^{2d_0n}\binom{\binom{n}{2}}{k}\cdot p^k \cdot e^{-\frac{n^2p}{33}}
\le 2d_0n\cdot \left( \frac{enp}{4d_0} \right) ^{2d_0n} \cdot e^{-\frac{n^2p}{33}} = \exp \left( n^2p \cdot \left( o(1) + \frac{\log (250e)}{500} - \frac{1}{33} \right) \right) =o(1).
$$

\end{proof}

\begin{corol}
With high probability, for every subgraph $H\subseteq G$ with $e(H) = \delta (G) -2$ the graph $G\setminus H$ is Hamiltonian.
\end{corol}

Indeed, assume that $G$ satisfies the properties in the assertions of Lemma \ref{lemma:expander} and Lemma \ref{lemma:boosters}, an event which occurs with high probability. Then, given $H\subseteq G$ with $e(H) = \delta (G) -2$, the subgraph $G\setminus H$ contains an $\left( \frac{n}{4},2 \right)$-expander subgraph $\Gamma_0$ with at most $d_0n$ edges. Then, while $\Gamma _i$ is not Hamiltonian, $G\setminus H$ contains a booster with respect to it, which we add to $\Gamma _i$ to obtain $\Gamma _{i+1}$. Repeating this for at most $n$ steps we obtain a Hamiltonian subgraph of $G\setminus H$.
\end{proof}

\subsection{Dense case}  \label{dense}

\begin{thm}
Let $G\sim G(n,p)$ where $n^{-0.4} \le p\le 1$. Then with high probability $r_g(G,\HH) = \delta (G) -1$.
\end{thm}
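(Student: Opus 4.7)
The plan is to establish Hamiltonicity of $G\setminus H$ via the Chv\'atal--Erd\H{o}s theorem (Theorem \ref{chvatal-erdos}) applied to a large well-behaved core, together with a short insertion step for the (bounded number of) vertices that might remain of low degree in $G\setminus H$.

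I would first establish that, with high probability, $G\sim G(n,p)$ with $p\ge n^{-0.4}$ satisfies: \emph{(i)} $\delta(G),\Delta(G) = (1+o(1))np$; \emph{(ii)} $\alpha(G)=O(\log n / p)$; \emph{(iii)} every pair of disjoint sets $U,W\subseteq V$ with $|U|,|W|\ge C\log n/p$ satisfies $e_G(U,W)>2np$; \emph{(iv)} $|N_G(U)\setminus U|\ge \min\{c\cdot np\cdot |U|,\,n/4\}$ for every $U$ with $|U|\le n/2$, for suitable constants $c,C$. All four follow from routine Chernoff and union bound calculations using $np\ge n^{0.6}$.

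Now fix $H\subseteq G$ with $e(H)=\delta(G)-2\le (1+o(1))np$. Since $\sum_v d_H(v)=2e(H)$, the set $S := \{v : d_H(v)\ge np/10\}$ has size $|S|=O(1)$; outside $S$ every vertex retains degree $(1-o(1))np$ in $G\setminus H$, and every vertex has degree $\ge 2$ there (a direct counting using $d_H(u)+d_H(v)\le 2e(H)$ in fact forces at most one vertex to be pushed all the way down to degree $2$). Set $G' := (G\setminus H)[V\setminus S]$. Any independent set $I$ of $G'$ has $e_G(I)\le e(H)\le 2np$, so by \emph{(iii)} $|I|=O(\log n/p)$; hence $\alpha(G')=O(\log n/p)$. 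Similarly one shows $\kappa(G')\ge np/4$: a smaller vertex cut $T$ would split $V\setminus S\setminus T$ into $A\cup B$ with no $G'$-edges between them, forcing all $G$-edges between $A$ and $B$ into $H$; this contradicts \emph{(iii)} when $\min\{|A|,|B|\}\ge C\log n / p$ and contradicts \emph{(iv)} otherwise, after accounting for the $O(1)$ targets inside $S$ and the at most $2np$ ``$H$-neighbours" of $A$ in $B$. Thus $\alpha(G')<\kappa(G')$, and Theorem \ref{chvatal-erdos} implies that $G'$ is Hamilton-connected.

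Finally, reinsert the vertices of $S$ into a Hamilton cycle of $G\setminus H$. For each $v\in S$ choose two neighbours $a_v,b_v\in V\setminus S$ (possible since $d_{G\setminus H}(v)\ge 2$ and $|S|=O(1)$; corner configurations in which $v$'s remaining neighbours all lie in $S$ can be absorbed into $S$ without inflating its size beyond $O(1)$). When $|S|=1$ the Hamilton-connectedness of $G'$ immediately gives a Hamilton cycle of $G\setminus H$: close a Hamilton path of $G'$ from $a_v$ to $b_v$ through $v$. For general $|S|=s=O(1)$ one needs $s$ vertex-disjoint paths in $G'$ linking $b_{v_i}$ to $a_{v_{i+1}}$ cyclically, whose union covers $V\setminus S$; such a Hamilton linkage is obtained by partitioning $V\setminus S$ into $s$ balanced chunks (each containing the designated pair of endpoints) and invoking Theorem \ref{chvatal-erdos} inside each chunk, using \emph{(iii)}--\emph{(iv)} to verify that the Chv\'atal--Erd\H{o}s hypothesis is inherited by each induced subgraph. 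This final linkage step is the main technical obstacle, since Chv\'atal--Erd\H{o}s delivers only a single Hamilton path at a time, and one has to argue that the inequality $\alpha<\kappa$ is robust to partitioning $G'$ into constantly many balanced pieces.
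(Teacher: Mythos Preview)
Your approach is correct in spirit and uses the same engine as the paper (the Chv\'atal--Erd\H{o}s theorem applied after stripping off the ``damaged'' part of $G\setminus H$), but you make your life harder than necessary at the insertion step. The paper exploits the very observation you mention in passing: since $e(H)=\delta(G)-2$, two distinct vertices $u,v$ with $d_{G\setminus H}(u),d_{G\setminus H}(v)<\tfrac12\delta(G)$ would force $d_H(u)+d_H(v)>\delta(G)$ and hence $e(H)\ge d_H(u)+d_H(v)-1>\delta(G)-1$, a contradiction. So at most \emph{one} vertex can have small degree in $G\setminus H$, and the paper simply removes the single vertex $v_0$ of minimum degree in $G\setminus H$. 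The remaining graph $G'=(G\setminus H)-v_0$ then has $\delta(G')\ge\tfrac12\delta(G)-1\ge\tfrac14np-1$, and verifying $\alpha(G')<\kappa(G')$ is a direct consequence of two density properties of $G$ (every set of size $\tfrac18 np$ spans more than $e(H)$ edges; every pair of such sets has more than $e(H)$ crossing edges). Since $d_{G\setminus H}(v_0)\ge 2$, a single invocation of Hamilton--connectedness of $G'$ closes the cycle through $v_0$.

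In contrast, your set $S=\{v:d_H(v)\ge np/10\}$ may have up to $\sim 20$ vertices, and this forces the Hamilton--linkage step you flag as the main obstacle. Your proposed fix (partition $V\setminus S$ into $s$ balanced chunks and apply Theorem~\ref{chvatal-erdos} in each) can be made to work, but it genuinely needs an additional argument: an arbitrary balanced partition need not inherit the connectivity bound, since a small side $A$ of a cut in a chunk could have most of its $G$--neighbours outside the chunk. One has to choose the partition (e.g.\ randomly, then invoke a further Chernoff/union bound over vertices using $np\ge n^{0.6}$) so that every vertex retains $\Theta(np)$ neighbours in every chunk; only then do your properties (iii)--(iv) transfer. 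This is doable, but it is extra work that evaporates once you notice that $|S|$ can be taken to be $1$.
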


\begin{proof}

\begin{lemma}  \label{denselemma}
With high probability $G$ has the following properties.

\begin{enumerate}[{label=\textbf{(Q\arabic*)}}]
    \item
      \label{item:dense-min-degree}
      $\delta (G) \ge \frac{1}{2}np$;
    \item
      \label{item:dense-independent-set}
      if $U\subseteq V(G)$ and $|U|=\frac{1}{8}np$ then $e(U) \ge n$;
    \item
      \label{item:dense-vertex-conn}
      if $U,W\subseteq V(G)$ are disjoint and $|U|= |W| = \frac{1}{8}np$ then $e(U,W) \ge n$.
\end{enumerate}

\end{lemma}

\begin{proof}
An upper bound of order $o(1)$ on the probability that $G\sim G(n,p)$ fails to uphold any of the three properties follows from applying the union bound and standard bounds on binomial distributions.

\end{proof}

The proof of Theorem \ref{dense} now follows from Lemma \ref{denselemma}. We prove that if $G$ has properties \ref{item:dense-min-degree}-\ref{item:dense-vertex-conn} then $G\setminus H$ is Hamiltonian for every such $H$ with $e(H) = \delta (G)-2$.

Let $v_0\in V(G)$ be a vertex with $d_{G\setminus H}(v_0)=\delta (G\setminus H)$ and denote $G' \coloneqq (G\setminus H)-v_0$. Then $\delta (G') \ge \frac{1}{2}\delta (G)$. We now claim that $\kappa (G') > \alpha (G')$, and therefore by Theorem \ref{chvatal-erdos} we conclude that $G'$ is Hamilton-connected. Since $d_{G\setminus H}(v_0) \ge 2$ this implies that $G\setminus H$ is Hamiltonian.

Indeed, by \ref{item:dense-independent-set}, every vertex subset with $\frac{1}{8}np$ vertices spans at least $n-\delta(G)>0$ edges in $G'$, and therefore $\alpha (G') < \frac{1}{8}np$.

On the other hand, let $V(G') = U\cup X \cup W$ be a partition of $V(G')$ such that $U,W$ are non-empty and $e_{G'}(U,W) =0$. Assume without loss of generality that $|U|\le |W|$, Then $|U| < \frac{1}{8}np$, since otherwise by \ref{item:dense-vertex-conn} we have $e_{G'}(U,W) \ge n-\delta (G) > 0$. Additionally, by \ref{item:dense-min-degree} we have $\delta (G') \ge \frac{1}{2}\delta (G) \ge \frac{1}{4}np$. Since $U\cup X$ contains all of the neighbours of a vertex $u\in U$ we get $|X| \ge d_{G'}(u) - |U| \ge \frac{1}{4}np-\frac{1}{8}np =\frac{1}{8}np$. Therefore $\kappa (G') \ge \frac{1}{8}np$.

\end{proof}

\section{Concluding remarks}

We note that the proof of Theorem \ref{main} (and, in fact, Corollary \ref{main-corol}) presented in this paper can be adjusted slightly to prove the following statement, where here PM denotes the property of containing a perfect matching.

\begin{prop}
Let $p(n)\in [0,1]$ and $G\sim G(n,p)$, where $n$ is even. Then with high probability $r_g(G,\text{PM}) = \delta (G) $.
\end{prop}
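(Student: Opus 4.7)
The upper bound $r_g(G,\text{PM})\le \delta(G)$ is trivial: removing all $\delta(G)$ edges incident to a minimum-degree vertex isolates it, killing every perfect matching. For the lower bound, the subcritical ($np-\log n\to -\infty$) and critical ($np-\log n\to c$) regimes are handled exactly as in Corollary \ref{main-corol}, with the Hamiltonicity threshold condition $\delta(G)\ge 2$ replaced by the PM threshold condition $\delta(G)\ge 1$ and the Ajtai--Koml\'os--Szemer\'edi / Bollob\'as Hamiltonicity hitting-time theorem replaced by the Erd\H{o}s--R\'enyi / Bollob\'as PM hitting-time theorem. In the regime $np-\log n\to\infty$ with $\delta(G)=1$ w.h.p., $G$ has a PM w.h.p., and the trivial bounds give $1\le r_g\le\delta(G)=1$. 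The core case is $\delta(G)\ge 2$ w.h.p., where one must show that $G\setminus H$ contains a PM for every $H\subseteq G$ with $e(H)=\delta(G)-1$.

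Fix such $H$ and split on $\delta(G\setminus H)$. If $\delta(G\setminus H)\ge 2$, rerun the proof of Theorem \ref{main} with $e(H)=\delta(G)-1$ in place of $e(H)=\delta(G)-2$ throughout. Every quantitative estimate in Lemmas \ref{lemma:expander}, \ref{lemma:boosters} and Lemma \ref{denselemma} changes by at most an additive $O(1)$ or a $(1+o(1))$ factor, so the conclusions go through verbatim and $G\setminus H$ is Hamiltonian. Since $n$ is even, $G\setminus H$ then contains a perfect matching.

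Suppose instead $\delta(G\setminus H)=1$, and let $v_0$ satisfy $d_{G\setminus H}(v_0)=1$. Then $e_H(v_0)\ge d_G(v_0)-1\ge \delta(G)-1=e(H)$, which forces every edge of $H$ to be incident to $v_0$ and $d_G(v_0)=\delta(G)$. Let $u_0$ be the unique $G\setminus H$-neighbour of $v_0$. Any PM of $G\setminus H$ must contain the edge $\{v_0,u_0\}$, and since $H$ has no edges disjoint from $v_0$, removing $\{v_0,u_0\}$ and its endpoints leaves the induced subgraph $G-\{v_0,u_0\}$. It thus suffices to find a PM of $G-\{v_0,u_0\}$ for every candidate pair $(v_0,u_0)$ with $v_0$ a minimum-degree vertex and $u_0\in N_G(v_0)$. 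In the sparse regime, \ref{item:SMALL-is-small} bounds the number of such pairs by $n^{o(1)}$, so a union bound reduces the task to a fixed pair; the subgraph $G-\{v_0,u_0\}$ satisfies analogues of \ref{item:minmax-degrees}--\ref{item:not-few-many-edges}, and rerunning the first case of the argument with $H'=\emptyset$ on the $(n-2)$-vertex graph produces a Hamilton cycle and hence a PM. The dense regime is similar, and even easier.

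The main obstacle is the near-threshold window $\delta(G)\in\{2,3\}$, where $\delta(G-\{v_0,u_0\})$ may fall below $2$ and the Hamilton-based argument no longer applies. One resolves this by a direct first-moment computation showing that w.h.p.\ no vertex $w$ satisfies $N_G(w)\subseteq\{v_0,u_0\}$; consequently $G-\{v_0,u_0\}$ is still supercritical for PM, and the PM hitting-time theorem yields the required perfect matching.
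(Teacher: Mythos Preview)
Your route diverges from the paper's. The paper does not try to manufacture Hamiltonicity in $G\setminus H$; instead, for the sparse super-critical regime it relaxes the expansion requirement from $(n/4,2)$ to $(n/4,1)$, so that Lemma~\ref{lemma:expander} goes through for any $H$ with $e(H)\le\delta(G)-1$ with no hypothesis on $\delta(G\setminus H)$, and then invokes the matching analogue of P\'osa's lemma (a connected $(k,1)$-expander with no perfect matching has at least $(k+1)^2/2$ boosters for its maximum matching; see \cite{FK}, Lemma~6.3). Since boosters now increase the matching number rather than the longest-path length, $\delta\ge 2$ is never needed and the whole $\delta(G\setminus H)=1$ case split disappears. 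For the dense regime the paper simply observes that Theorem~\ref{main} (with the same $+1$ slack) already yields a Hamilton \emph{path} in $G\setminus H$, hence a perfect matching.

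Your $\delta(G\setminus H)=1$ branch has real gaps. First, the claim that \ref{item:SMALL-is-small} bounds the number of candidate pairs $(v_0,u_0)$ by $n^{o(1)}$ is false: $|\sm(G)|\le n^{0.1}$, but each such vertex has up to $\delta(G)\le 5np\le 5n^{0.6}$ neighbours, so you only get a polynomial bound. More seriously, your final paragraph appeals to ``the PM hitting-time theorem'' for $G-\{v_0,u_0\}$, but that is a distributional statement about $G(n-2,p)$, and $G-\{v_0,u_0\}$ is \emph{not} distributed as $G(n-2,p)$ once $v_0,u_0$ are chosen depending on $G$; near the threshold a union bound over all $\binom{n}{2}$ pairs also fails since the individual failure probability is not $o(n^{-2})$. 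In fact your worry about $\delta(G-\{v_0,u_0\})<2$ is misplaced --- \ref{item:SMALL-are-far} already forbids any $\sm$ vertex within distance $2$ of $v_0$, so $\delta(G-\{v_0,u_0\})\ge 2$ deterministically --- but the subsequent step of finding a Hamilton cycle in $G-\{v_0,u_0\}$ must then be argued from the deterministic properties \ref{item:minmax-degrees}--\ref{item:not-few-many-edges} of $G$ (including re-verifying the analogue of \ref{item:SMALL-are-far} after two vertex deletions), not by quoting a theorem about random graphs. This can be done, but it is exactly the detour the paper's $(k,1)$-expander route avoids.
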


For the critical and sub-critical regimes the same reasoning as in the proof of Corollary \ref{main-corol} can be applied, where for the critical regime we replace the probability that a graph is Hamiltonian with the result by Erd\H{o}s and R\'{e}nyi \cite{ER} regarding perfect matchings. Also observe that, given Theorem \ref{main}, only the sparse case of the super-critical regime requires a proof. Indeed, in the dense case, if $G\sim G(n,p)$ then, with high probability, for every $H$ with $e(H) \le \delta (G)-1$ the graph $G \setminus H$ contains a Hamilton \textbf{path}, also implying that it contains a perfect matching. 

The sparse case of the super-critical regime can be proved by applying some small adjustments to the proof of Theorem \ref{thm-sprs-case}. Here, property \ref{item:minmax-degrees} in Lemma \ref{lemma:props} should be changed to state that $\delta (G) \ge 1$. A slight adjustment to Lemma \ref{lemma:expander} then shows that, with high probability, $G\setminus H$ contains a sparse $(n/4,1)$-expander for any $H$ with at most $\delta (G) -1$ edges. The last part of the proof is identical, as $(k,1)$-expanders are also known to have $\frac{(k+1)^2}{2}$ boosters (with respect to maximum size matchings, rather than maximum length paths. See e.g. \cite{FK}, Lemma 6.3).

\subsection*{Acknowledgement}
The author would like to thank Professor Michael Krivelevich for his support and valuable advice during the writing of this paper.


\begin{thebibliography}{99}

\bibitem{AKS85} M. Ajtai, J. Koml{\'o}s and E. Szemer{\'e}di, \textit{First occurrence of Hamilton cycles in random graphs}, Cycles in graphs '82, North Holland Mathematical Studies 115, North Holland, Amsterdam (1985), 173--178.

\bibitem{AK} Y. Alon and M. Krivelevich, \textit{Random graph's Hamiltonicity is strongly tied to its minimum degree}, Electronic Journal of Combinatorics 27 (2020), Paper No. 1.30.

\bibitem{BKS} S. Ben-Shimon, M. Krivelevich and B. Sudakov, \textit{On the resilience of Hamiltonicity and optimal packing of Hamilton cycles in random graphs}, SIAM Journal on Discrete Mathematics 25 (2011), 1176--1193.

\bibitem{B84} B. Bollob{\'a}s, \textit{The evolution of sparse graphs}, Graph Theory and Combinatorics, Academic Press, London (1984), 35--57.

\bibitem{CE}  V. Chv{\'a}tal and P. Erd\H{o}s, \textit{A note on Hamiltonian circuits}, Discrete Mathematics 2 (1972), 111--113.

\bibitem{ER} P. Erd\H{o}s and A. R\'{e}nyi, \textit{On the existence of a factor of degree one of a connected
random graph}, Acta Math. Acad. Sci. Hungar 17 (1966), 359--368.

\bibitem{FK} A. Frieze and M. Karo{\'n}ski, {\bf Introduction to random graphs}.  Cambridge University Press, 2015.

\bibitem{LS} C. Lee and B. Sudakov, \textit{Dirac's theorem for random graphs}, Random Structures \& Algorithms 41 (2012), 293--305.

\bibitem{MONT} R. Montgomery, \textit{Hamiltonicity in random graphs is born resilient}, Journal of Combinatorial Theory Series B 139 (2019), 316--341.

\bibitem{NST} R. Nenadov, A. Steger and M. Truji{\'c}, \textit{Resilience of perfect matchings and Hamiltonicity in random graph processes}, Random Structures \& Algorithms 54 (2019), 797--819.

\bibitem{ORE} O. Ore, \textit{Arc coverings of graphs}, Annali di Matematica Pura ed Applicata 55 (1961), 315--321.

\bibitem{POS} L. P{\'o}sa, \textit{Hamiltonian circuits in random graphs}, Discrete Mathematics 14 (1976), 359--364.

\bibitem{SUDA} B. Sudakov, \textit{Robustness of graph properties}, Surveys in combinatorics, Cambridge University Press, Cambridge, MA, 2017, 372--408.

\bibitem{SV} B. Sudakov and V. H. Vu, \textit{The local resilience of random graphs}, Random Structures \& Algorithms
33 (2008), 409--433.

\end{thebibliography}
\end{document}